\documentclass[11pt,twoside]{amsart}
\usepackage{amsxtra}
\usepackage{color}
\usepackage{amsmath,amsthm,amssymb}
\usepackage{mathrsfs,mathtools}
\usepackage{stmaryrd}
\usepackage{hyperref}
\usepackage{enumerate}
\usepackage{xcolor}
\usepackage{pifont}
\usepackage{adjustbox}
\usepackage{tikz}

\theoremstyle{theorem}
\newtheorem{theorem}{Theorem}[section]

\newtheorem{proposition}[theorem]{Proposition}

\newtheorem*{theorem*}{Theorem}
\theoremstyle{definition}

\newtheorem*{question}{Question}
\theoremstyle{remark}
\newtheorem{remark}[theorem]{Remark}

\def\sideremark#1{\ifvmode\leavevmode\fi\vadjust{
		\vbox to0pt{\hbox to 0pt{\hskip\hsize\hskip1em
				\vbox{\hsize3cm\tiny\raggedright\pretolerance10000
					\noindent #1\hfill}\hss}\vbox to8pt{\vfil}\vss}}}

\newcommand{\R}{{\mathbb R}}

\newcommand{\T}{{\mathbb T}}

\newcommand{\beq}{\begin{equation}}
\newcommand{\eeq}{\end{equation}}
\renewcommand{\a}{\alpha}
\renewcommand{\b}{\beta}

\renewcommand{\d}{\delta}

\newcommand{\g}{\gamma}

\renewcommand{\o}{\omega}




\newcommand{\U}{{\mathrm U}}
\newcommand{\SU}{{\mathrm{SU}}}

\newcommand{\SO}{{\mathrm {SO}}}

\newcommand{\M}{{\mathrm M}}
\renewcommand{\T}{{\mathrm T}}

\newcommand{\G}{{\mathrm G}}
\newcommand{\K}{{\mathrm K}}
\renewcommand{\L}{{\mathrm L}}
\newcommand{\Z}{{\mathrm Z}}


\newcommand{\W}{\wedge}

\DeclareMathOperator\ad{ad}

\newcommand{\Ric}{{\rm Ric}}

\newcommand{\Scal}{{\rm Scal}}


\renewcommand{\gg}{\mathfrak{g}}

\newcommand{\gk}{\mathfrak{k}}

\newcommand{\gm}{\mathfrak{m}}

\newcommand{\gu}{\mathfrak{u}}

\newcommand{\gz}{\mathfrak{z}}

\newcommand{\su}{\mathfrak{su}}



\newcommand{\st}{\ |\ }

\textheight=8in
\textwidth=6in
\oddsidemargin=0.25in
\evensidemargin=0.25in

\numberwithin{equation}{section}

\title[Bismut Ricci flat manifolds with symmetries]{Bismut Ricci flat manifolds with symmetries}
\author{Fabio Podest\`a and Alberto Raffero}
\subjclass[2020]{
53C25, 
53C07, 
53C30, 
53B05, 
53E20
}
\keywords{Bismut connection, Ricci flat connection, harmonic forms, homogeneous space}
\address{Dipartimento di Matematica e Informatica ``U.~Dini'' \\ Universit\`a degli Studi di Firenze\\ Viale Morgagni 67/a\\ 50134 Firenze\\ Italy\\ }
\email{fabio.podesta@unifi.it}
\address{Dipartimento di Matematica ``G.~Peano'' \\ Universit\`a degli Studi di Torino\\
Via Carlo Alberto 10\\
10123 Torino\\ Italy}
\email{alberto.raffero@unito.it}

\begin{document}

\begin{abstract}
We construct examples of compact homogeneous Riemannian manifolds admitting an invariant Bismut connection that is Ricci flat and non-flat,
proving in this way that the generalized Alekseevsky-Kimelfeld theorem does not hold. 
The classification of compact homogeneous Bismut Ricci flat spaces in dimension $5$ is also provided.
Moreover, we investigate compact homogeneous spaces with non trivial third Betti number, and we point out other possible ways to construct Bismut Ricci flat manifolds.
Finally, since Bismut Ricci flat connections correspond to fixed points of the generalized Ricci flow, we discuss the stability of some of our examples under the flow.
\end{abstract}

\maketitle

\section{Introduction}
Let $(\M,g)$ be a Riemannian manifold, denote by $\nabla^g$ its Levi Civita connection, and consider a non-vanishing $3$-form $H\in\Omega^3(\M)$.
The {\em Bismut connection} associated with the pair $(g,H)$ is defined via the identity
\[
g(\nabla_XY,Z) = g(\nabla^g_XY,Z) + \frac12 H(X,Y,Z),
\]
for all $X,Y,Z\in\Gamma(T\M)$, it is the unique metric connection on $\M$ with totally skew-symmetric torsion $H$, and it has the same geodesics as $\nabla^g$.

These connections were successfully used in index theory problems in complex non-K\"ahler geometry \cite{Bis}, where they are characterized as the only complex metric connections with totally skew-symmetric torsion $H=d^c\o$
on a given Hermitian manifold $(\M,g,J)$, see e.g.~\cite{G}.
In this setting, {\em strong K\"ahler with torsion} (SKT) complex manifolds are precisely the Hermitian manifolds $(\M,g,J)$ whose K\"ahler form $\o$ satisfies the condition $dd^c\o=0$,
or, equally, whose Bismut connection has closed torsion, see for instance the survey \cite{FiTo} for more details.
On the other hand, the class of Riemannian manifolds admitting a Bismut connection $\nabla$ whose torsion $H$ is $\nabla$-parallel has been throughly studied in the literature, 
as this condition naturally holds for several geometrically significant structures as naturally reductive spaces, nearly K\"ahler and Sasakian structures among others, see e.g.~\cite{Agr,AFF,CMS}. 
Furthermore, Bismut connections are also of interest in theoretical and mathematical physics, see \cite{FI} and the references therein for a detailed explanation.

Bismut connections with closed torsion form play a central role in generalized Riemannian geometry, where they are naturally associated with generalized metrics on exact Courant algebroids, see \cite{Gar,FS}.
In this case, since the torsion of $\nabla$ is non-vanishing, the Ricci tensor $\Ric^{\nabla}$ is not symmetric, and one has (see \cite[Prop.~3.18]{FS})
\begin{equation}\label{RicB}
\Ric^\nabla = \Ric_g -\frac14 H^2 - \delta_g H,
\end{equation}
where $\Ric_g$ denotes the Ricci tensor of $\nabla^g$, $\delta_g$ is the formal adjoint of $d$, and the symmetric 2-tensor $H^2$ is defined as $H^2(X,Y) \coloneqq g(\imath_XH,\imath_YH)$, for every $X,Y\in\Gamma(T\M)$.

It is clear from \eqref{RicB} that a Bismut connection $\nabla$ with closed torsion form $H$ is {\em Ricci flat}, i.e., $\Ric^\nabla=0$,
if and only if $H$ is a $g$-harmonic $3$-form and the Ricci tensor of $g$ satisfies the equation $\Ric_g = \frac14 H^2$.
A pair $(g,H)$ giving rise to a Ricci flat Bismut connection $\nabla$ is called a {\em Bismut Ricci flat pair} ({\em BRF pair} for short) throughout the following.
In generalized Riemannian geometry, such pairs correspond to special types of generalized Einstein structures, see \cite[Ch.~3]{FS} for more information.
We recall here that for a BRF pair $(g,H)$ the scalar curvature $\Scal_g$ and the norm of $H,$ which are related by the identity $\Scal_g = \frac14 ||H||^2$,  are constant on $\M$, see  \cite[Lemma 2.24]{Lee}.

\smallskip

BRF pairs are fixed points of the {\em generalized Ricci flow}, a geometric flow introduced in \cite{CFMP,OSW}
in the context of renormalization group flows of two-dimensional nonlinear sigma models.
To describe this flow, consider a family of Riemannian metrics $g_t$ depending on a real parameter $t$, fix a closed $3$-form $H_0\in\Omega^3(\M)$ and let $H_t = H_0 + db_t$, where $b_t\in\Omega^2(\M)$.
Then, the generalized Ricci flow for $(g_t,b_t)$ is defined as follows
\begin{equation}\label{GRF}
\left\{
\begin{split}
\frac{\partial}{\partial t} g_t &= -2\Ric_{g_t} +\frac12 H^2_t,\\
\frac{\partial}{\partial t} b_t &= - \delta_{g_t} H_t,
\end{split}
\right.
\end{equation}
and it is well-posed on compact manifolds, see e.g.~\cite{FS}.
Notice that BRF pairs can also be regarded as trivial examples of {\em steady solitons} for the generalized Ricci flow. Indeed, the latter are defined by pairs $(g,H)$ satisfying the more general equations
\[
\Ric_g = \frac14 H^2 -\mathcal{L}_X g,\quad \delta_gH = -\imath_X H,
\]
for some vector field $X\in\Gamma(TM)$. The existence of non-trivial solitons on compact (complex) $4$-manifolds has been recently proved in \cite{Str4,StUs}.

Remarkably, the flow \eqref{GRF} can be seen as a generalization of Hamilton's Ricci flow to Bismut connections with closed torsion form \cite{Str} and as a flow of generalized metrics on exact Courant algebroids \cite{Gar,Str2}.
Moreover, it is related to some geometric flows in Hermitian Geometry, like the pluriclosed flow and the generalized K\"ahler Ricci flow, see e.g.~\cite{GJS,Str3,StTi0,StTi,StTi2}.
The reader may refer to the recent book \cite{FS} for an excellent introduction to the topic.

\smallskip

Standard examples of manifolds carrying BRF pairs are provided by compact simple Lie groups endowed with the bi-invariant metric (given by the negative of the Cartan-Killing form) and the standard harmonic $3$-form,
see e.g.~\cite[Prop.~3.53]{FS}. Notice that, in such a case, the corresponding Bismut connection is flat.
On the other hand, a simply connected compact Riemannian manifold admitting a flat Bismut connection is isometric to a product of compact simple Lie groups with bi-invariant metrics \cite[Thm.~3.54]{FS}.
It is currently not known whether other left-invariant BRF pairs may exist on compact Lie groups.

Since in the Riemannian case every homogeneous Ricci flat manifold is flat \cite{AK}, M.~Garcia-Fern\'andez and J.~Streets asked the following:
\begin{question}[\cite{FS}]
Given $(\M,g,H)$ a homogeneous Riemannian manifold with $H$ invariant and zero Bismut Ricci curvature, is the associated Bismut connection flat?
\end{question}
In this paper, we answer this question negatively.
After showing some general facts on compact homogeneous spaces with non-zero third Betti number in Section \ref{Sectb3}, we examine low dimensional compact homogeneous spaces in Section \ref{5dimsect}.
Since the $3$- and $4$-dimensional case are well understood from the results of \cite{FS}, we focus on $5$-dimensional compact homogeneous spaces 
and we obtain a full classification of those admitting invariant BRF pairs  in Theorem \ref{class5dim}. 
Beyond the case of compact Lie groups, we find a family of compact homogeneous spaces $\M_{p,q}$ parametrized by a pair of positive integers $p\geq q$ with $\mathrm{gcd}(p,q)=1$, 
all diffeomorphic to $S^3\times S^2$, where we prove the existence of invariant BRF pairs $(g,H)$
for which the corresponding Bismut connection $\nabla$ is not flat,  the metric $g$ is not Einstein and the torsion form $H$ is not $\nabla$-parallel, see Theorem \ref{BismutMpq}.
The uniqueness of such pairs is also studied in the same theorem.
Finally, in Section \ref{Mpqasymp}, we investigate the behavior of the homogeneous generalized Ricci flow on the spaces $\M_{p,q}$ with $p\neq q$,
showing that the invariant BRF pairs found in Theorem \ref{BismutMpq} are global attractors. 

As an additional remark, in Section \ref{KobaConst} we show that our examples are particular cases of a general construction by Kobayashi in the Riemannian Einstein setting \cite{K},
and we pave the way for a possible use of his construction to provide new examples of generalized Einstein manifolds, see Proposition \ref{KobaProp}.

\vspace{0.4cm}

{\bf Notation.} Throughout the paper, Lie groups will be denoted by capital letters and their  Lie algebras will be denoted by the respective gothic letters.
The Cartan-Killing form of a Lie algebra will be always denoted by $B$.
When a Lie group $\G$ acts on a manifold $\M$, the vector field associated to any $X\in\gg$ will be denoted by $\hat X$.
Finally, the space of Riemannian metrics on a manifold $\M$ will be denoted by $\mathcal{M}(\M)$.

\section{Compact homogeneous spaces with positive third Betti number}\label{Sectb3}
A preliminary step in the search for invariant Bismut Ricci flat connections on compact homogeneous spaces consists in finding conditions under which the third Betti number of the space is positive.

A typical example is given by compact semisimple Lie groups, where $b_3$ coincides with the number of simple factors, see \cite{ChEi}. In particular,
when $\G$ is a compact simple Lie group, then the third cohomology group $H^3(\G,\mathbb R)$ is generated by the {\em standard $3$-form} $\o$ defined as follows
\begin{equation}\label{stanform}
\omega(X,Y,Z) \coloneqq B([X,Y],Z),
\end{equation}
for every left-invariant vector fields $X,Y,Z$ on $\G$.
 More generally, in \cite{Aza} it was proved that the isotropy subgroup $\K$ of a compact homogeneous space $\M=\G/\K$ such that $b_3(\M) \geq 1$ must be finite whenever $\G$ is simple.
In the next theorem, we review this result and we obtain some new restrictions on $\K$ in the case where $\G$ is locally a product of two simple factors.

\begin{theorem}\label{b3}
Let $\G$ be a compact Lie group and let $\M=\G/\K$ be a $\G$-homogeneous space with $b_3(\M)\geq 1$. Then
\begin{enumerate}[a$)$]
\item\label{b3a} if $\G$ is simple, then $\K$ is finite;
\item\label{b3b} if $\G$ is locally a product of two simple factors $\G_1,\G_2$, then either the Lie algebra $\gk$ is contained in one of the two factors $\frak g_i$ or the Lie algebras $\frak g_i$ contain subalgebras $\gk_i$
isomorphic to $\gk$ and the projections $p_i:\gk\to\frak g_i$ are isomorphisms onto $\gk_i$, for $i=1,2$.
\end{enumerate}
\end{theorem}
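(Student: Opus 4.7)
My plan is to analyse $H^3(\G/\K,\R)$ through the pullback $\pi^*\colon H^3(\G/\K,\R)\to H^3(\G,\R)$ induced by the projection $\pi\colon\G\to\G/\K$, combined with a Serre spectral sequence argument for part (b). Passing to the finite cover $\G/\K^0\to\G/\K$ (which induces an injection on real cohomology) and, if needed, to the universal cover of $\G$, we may assume throughout that $\G$ is simply connected and $\K$ is connected.

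For part (a), let $\alpha$ be a closed $\G$-invariant $3$-form on $\G/\K$. Its pullback $\pi^*\alpha$ is a closed left-invariant $3$-form on $\G$; since $\gg$ is simple, $H^3(\gg,\R)$ is spanned by the class of the standard form $\omega(X,Y,Z)=B([X,Y],Z)$, so averaging over the right $\K$-action yields
\[
\pi^*\alpha \;=\; c\,\omega + d\gamma, \qquad \gamma\in(\Lambda^2\gg^*)^\K,\ c\in\R.
\]
The horizontality $\iota_Y(\pi^*\alpha)=0$ for $Y\in\gk$ together with $\mathcal{L}_Y\gamma=0$ (Cartan's formula and the $\K$-invariance of $\gamma$) forces $d(\iota_Y\gamma)=c\,\iota_Y\omega$. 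Writing the left-invariant $1$-form $\iota_Y\gamma$ as $B(V,\cdot)$ for some $V\in\gg$ and unwinding this identity via $\Ad$-invariance of $B$ and $[\gg,\gg]=\gg$, one derives $V=-cY$, and hence
\[
\gamma(Y,X) \;=\; -c\,B(Y,X) \qquad \text{for all } Y\in\gk,\ X\in\gg.
\]
Setting $X=Y$ gives $0=\gamma(Y,Y)=-c\,B(Y,Y)$; since $B|_{\gk}$ is negative definite by compactness of $\K$, choosing any non-zero $Y\in\gk$ forces $c=0$, so $\pi^*$ is identically zero. The same identity with $c=0$ yields $\iota_Y\gamma=0$ for all $Y\in\gk$, so $\gamma$ is basic and descends to a $\G$-invariant $2$-form on $\G/\K$ with exterior derivative $\alpha$. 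Thus $H^3(\G/\K,\R)=0$ whenever $\dim\K>0$, as required.

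For part (b), assume $\G=\G_1\times\G_2$ is a product of simply connected compact simple groups and that $\gk$ is contained in neither $\gg_i$. Suppose by contradiction that $p_1$ is not injective, i.e.\ $\gk\cap\gg_2\neq 0$. Setting $\K_2^0:=\K\cap\G_2$ and $\mathrm{H}:=\K\cdot\G_2$, the identifications $\mathrm{H}/\K\cong\G_2/\K_2^0$ and $\G/\mathrm{H}\cong\G_1/p_1(\K)$ produce a fibration
\[
\G_2/\K_2^0 \;\longrightarrow\; \G/\K \;\longrightarrow\; \G_1/p_1(\K).
\]
Both $p_1(\K)$ and $\K_2^0$ are positive-dimensional subgroups of simple factors, so part (a) yields $b_3=0$ for both fiber and base. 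Moreover $p_1(\K)$ is connected and $\K_2^0$ has only finitely many components, so both fundamental groups are finite and $H^1(\,\cdot\,,\R)$ vanishes on fiber and base. Inspecting the $E_2$-page of the Serre spectral sequence, every term with $p+q=3$ is zero, forcing $b_3(\G/\K)=0$, a contradiction. The symmetric argument rules out $\gk\cap\gg_1\neq 0$, so both projections $p_i$ are injective, and hence isomorphisms onto subalgebras $\gk_i\cong\gk$ as claimed.

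The main obstacle is the computation in part (a) that extracts $V=-cY$: it uses crucially that $B$ is non-degenerate on $\gg$ (simplicity) and that a single averaging of $\gamma$ over the right $\K$-action gives $\mathcal{L}_Y\gamma=0$, so that the conflict between skew-symmetry of $\gamma$ and the definite symmetric form $B|_{\gk}$ can be exploited. In part (b), the subtle point is the possible non-connectedness of $\K\cap\G_2$, but the resulting contribution to $\pi_1$ is finite and hence trivial in real $H^1$, which is all that is needed for the spectral sequence to collapse in total degree $3$.
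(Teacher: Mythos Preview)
Your proof of part (a) is correct and follows essentially the same route as the paper: pull back an invariant closed $3$-form, write it as $c\,\omega+d\gamma$ with $\gamma$ bi-invariant, use horizontality and $\mathcal L_Y\gamma=0$ to obtain $\gamma(Y,\cdot)=-c\,B(Y,\cdot)$ on $\gk$, deduce $c=0$ from the definiteness of $B|_{\gk}$, and conclude that $\gamma$ is basic.

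For part (b) your argument is correct but genuinely different from the paper's. The paper repeats the invariant-forms computation of (a) in the two-factor setting: it writes $\hat\phi=c_1\omega_1+c_2\omega_2+d\xi$ and, picking $Z\in\ker p_1$ with $Z_2\neq 0$, obtains $0=\xi(Z,Z)=c_2 B_2(Z_2,Z_2)$, hence $c_2=0$; then either $c_1=0$ (so $\phi$ is exact, contradiction) or the same identity with $c_1\neq 0$ forces $Z'_1=0$ for every $Z'\in\gk$, i.e.\ $\gk\subseteq\gg_2$. Your approach instead builds the fibration $\G_2/(\K\cap\G_2)\to\G/\K\to\G_1/p_1(\K)$ and feeds part (a), applied to both fibre and base, into the Serre spectral sequence to kill $H^3$. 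The paper's route is more elementary and entirely self-contained within the Chevalley--Eilenberg calculus already set up for (a); your route is more structural, makes explicit that (b) is a corollary of (a), and would adapt more readily to products with more simple factors. One point worth stating explicitly in your write-up is that $p_1(\K)$ is closed in $\G_1$ (immediate from compactness of $\K$), so that $\mathrm H=\K\cdot\G_2=p_1^{-1}(p_1(\K))$ is a closed subgroup and the fibration is well defined; you use this implicitly.
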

\begin{proof}
The assertion \ref{b3a}) was proved in \cite{Aza}. We review the proof here.
Let $\pi:\G\to \G/\K$ be the projection and consider a closed $3$-form $\phi$ on $M$.
As $\G$ is compact, we can suppose that $\phi$ is $\G$-invariant.
If $\hat\phi\in \Omega^3(\G)$ is the closed $3$-form $\pi^*(\phi)$, then $\hat\phi$ is invariant under left $\G$-translations and right $\K$-translations.
Since $\G$ is simple, the third cohomology group $H^3(\G,\mathbb R)$ is generated by the standard $3$-form $\o$ defined in \eqref{stanform}.
Thus, $\hat\phi = c\,\omega + d\xi$, for some  $c\in\R$ and some $2$-form $\xi\in\Omega^2(\G)$.
Again by compactness, we can suppose that $\xi$ is invariant under left $\G$-translations and right $\K$-translations. This implies that, given $X,Y\in \frak g$ and $Z\in \gk$, we have
\[
\xi([Z,X],Y) + \xi(X,[Z,Y]) = 0,
\]
where we see $\xi$ as an element of $\Lambda^2(\frak g^*)$. Moreover, $\imath_Z\hat\phi =0$, so that
\[
c\,\omega(Z,X,Y) = - d\xi(Z,X,Y) = \xi([X,Y],Z).
\]
Hence
\[
c B([X,Y],Z) = \xi([X,Y],Z).
\]
As $\G$ is simple, we have $\frak g= [\frak g,\frak g]$ and therefore for every $U\in \frak g$
\beq\label{1}
\xi(U,Z) = cB(U,Z).
\eeq
Suppose now $\frak k\neq\{0\}$ and choose a non-zero element $Z\in\frak k$. By \eqref{1} we have that $0=c B(Z,Z)$, hence $c=0$ and $\imath_{\frak k}\xi=0$.
This means that $\xi$ descends to a $2$-form on $\M$ with $d\xi=\phi$. Consequently, every element in $H^3(M,\mathbb R)$ must be trivial, a contradiction.\par

We now prove \ref{b3b}). Using the same notation as in \ref{b3a}), suppose $0\neq [\phi]\in H^3(\M,\mathbb R)$ and consider the corresponding $3$-form $\hat\phi\in\Omega^3(\G)$.
The form $\hat\phi$ can be written as $\hat\phi = c_1\,\omega_1 + c_2\,\omega_2 + d\xi$, where $\omega_i$ is the standard $3$-form on $\G_i$, for $i=1,2$.
If $Z\in \gk$, then for every $X,Y\in\gg$
\[
c_1\,\omega_1(Z,X,Y)+c_2\,\omega_2(Z,X,Y) = - d\xi(Z,X,Y) = \xi([X,Y],Z).
\]
Hence
\[
\xi([X,Y],Z) = c_1 B_1([X,Y]_1,Z_1) + c_2 B_2([X,Y]_2,Z_2),
\]
where $B_i$ is the Cartan-Killing forms on $\frak g_i$, and the subscript ${-}_i$ denotes the projection along the $i^{th}$-component $\frak g_i$.
Now, suppose that, say, $p_1$	 has a non-trivial kernel containing some $Z\neq 0$ such that $Z_1=0$ and $Z_2\neq 0$.
As $\G$ is semisimple, we can express $Z = \sum_k[X_k,Y_k]$ for some vectors $X_k,Y_k\in\frak g$. Hence,
\[
0 = \xi(Z,Z) = c_2B_2(Z_2,Z_2),
\]
forcing $c_2=0$. If $c_1=0$, then $\hat\phi = d\xi$ and $\imath_{\gk}\xi = 0$, so that $\xi$ descends to an invariant $2$-form on $\M$ and $\phi$ is exact, a contradiction.
Therefore $c_1\neq 0$, and for every $Z'\in \gk$ we have $0=\xi(Z',Z') = c_1B_1(Z'_1,Z'_1)$ so that $Z'_1=0$, implying  $\gk\subseteq \frak g_2$.
\end{proof}

\begin{remark}
Note that if a projection, say $p_1$, is also surjective, namely $\gk_1=\gg_1$, then $\M$ is diffeomorphic to the simple factor $\G_2$ (up to a covering).
\end{remark}

\section{Compact homogeneous spaces with invariant BRF pairs}\label{5dimsect}

In this section, we investigate the existence of compact homogeneous spaces admitting invariant pairs $(g,H)$ such that
the corresponding Bismut connection $\nabla = \nabla^g + \frac12 g^{-1}H$ is Ricci flat and non-flat, and we aim at understanding whether the generalized Alekseevsky-Kimelfeld theorem may hold.

Basic examples of such spaces are given by compact (semi)simple Lie groups endowed with a bi-invariant metric $g$ and the standard harmonic $3$-form $\o$.
In these cases, the associated Bismut connection is flat.
Particular examples are given by the standard $3$-sphere $S^3\cong\SU(2)$ endowed with a constant curvature metric,
and the product $\M=S^3\times S^1$ with the product metric and the standard $3$-form on $S^3$ viewed as a $3$-form on $\M$ (cf.~\cite[Ex.~3.57]{FS}).

The $3$-dimensional case is settled in \cite[Prop.~3.55]{FS}, where it is proved that the existence of a BRF pair $(g,H)$ on a $3$-manifold $\M^3$ implies that $(\M^3,g)$ has constant sectional curvature 
and the associated Bismut connection is flat. 
The $4$-dimensional case can be completely understood following the reasoning in the proof of \cite[Thm.~8.26]{FS}. 
We specify it here for the reader's convenience.  
\footnote{We are indebted to Jeffrey Streets who pointed out to us this result.} 
\begin{proposition} 
Let $\M^4$ be a $4$-dimensional compact manifold admitting a BRF pair $(g,H)$. Then, the associated Bismut connection is flat. 
\end{proposition}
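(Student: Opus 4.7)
The strategy is to dualize $H$ to a harmonic 1-form and reduce to the 3-dimensional case already treated in \cite[Prop.~3.55]{FS} via a Bochner argument. First I would set $\alpha \coloneqq *_g H$, which is a $g$-harmonic 1-form on $\M^4$ precisely because $H$ is. A direct computation in an orthonormal coframe adapted so that $\alpha$ points along one direction yields the dimensional identity
\begin{equation*}
H^2 \;=\; |\alpha|^2 \, g \;-\; \alpha \otimes \alpha,
\end{equation*}
which rewrites the BRF equation $\Ric_g = \tfrac14 H^2$ as $\Ric_g = \tfrac14\bigl(|\alpha|^2 g - \alpha \otimes \alpha\bigr)$. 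In particular $\Ric_g(\alpha^\sharp, \alpha^\sharp) = 0$.

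The next step is to apply the Bochner-Weitzenb\"ock formula for harmonic 1-forms on the compact manifold $\M$, giving
\begin{equation*}
0 \;=\; \int_{\M} \bigl( |\nabla^g \alpha|^2 + \Ric_g(\alpha^\sharp, \alpha^\sharp) \bigr) \vol_g \;=\; \int_{\M} |\nabla^g \alpha|^2 \, \vol_g,
\end{equation*}
so $\nabla^g \alpha = 0$. Since $H$ is non-vanishing, so is $\alpha$, and $\alpha^\sharp$ is therefore a nowhere-zero parallel vector field on $(\M,g)$; in particular $|\alpha|$ is a positive constant.

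By the de Rham decomposition theorem applied to the (complete, simply connected) universal cover, one obtains a Riemannian product splitting $\widetilde{\M} = \R \times N^3$ with $\alpha^\sharp$ tangent to the $\R$-factor and $N^3$ simply connected. The pullback of $H$ equals $|\alpha|\, \vol_{N^3}$ up to orientation, and restricting the Ricci equation to $TN^3$ yields $\Ric_{g_{N^3}} = \tfrac14 |\alpha|^2 \, g_{N^3}$. Since any Einstein 3-manifold has constant sectional curvature, $(N^3, g_{N^3})$ is isometric to a round 3-sphere.

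Finally, since $\imath_{\partial_t} H = 0$ in the decomposition, the Bismut connection $\nabla = \nabla^g + \tfrac12 g^{-1} H$ preserves the splitting $T\R \oplus TN^3$: on the $\R$-factor it reduces to the flat Levi-Civita connection, and on $N^3$ it coincides with the Bismut connection of the BRF pair $(g_{N^3}, |\alpha|\, \vol_{N^3})$, which is flat by \cite[Prop.~3.55]{FS}. Hence $\nabla$ is flat on $\widetilde{\M}$, and therefore on $\M$. I expect the genuine content of the argument to lie in the dimensional identity for $H^2$ that triggers the Bochner vanishing; once this is in place, the remaining steps — de Rham splitting, 3-dimensional Einstein rigidity, and the compatibility of the Bismut connection with the product structure — follow essentially automatically.
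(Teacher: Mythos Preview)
Your argument is correct and is essentially the paper's own proof: dualize $H$ to a harmonic $1$-form, use the Bochner formula together with $\Ric_g=\tfrac14 H^2\ge 0$ to force it to be parallel, split the universal cover as $\R\times N^3$ with $\iota_{\partial_t}H=0$, and reduce to the $3$-dimensional case. The only cosmetic difference is that you read off $\Ric_g(\alpha^\sharp,\alpha^\sharp)=0$ from the explicit identity $H^2=|\alpha|^2 g-\alpha\otimes\alpha$ (up to a convention-dependent constant), whereas the paper obtains it directly from $H^2(\theta^\sharp,\theta^\sharp)=\|\iota_{\theta^\sharp}H\|^2=0$.
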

\begin{proof} 
As the $3$-form $H$ is harmonic, the $1$-form $\theta\coloneqq *H$ is also harmonic. Therefore, by Bochner formula, we have
\[
0=\int_M \langle \Delta \theta,\theta\rangle\, dV_g = \int_M \left({\Ric}(\theta^\#,\theta^\#) + ||\nabla^g\theta||^2\right)\, dV_g,
\]
so that $\theta$ is parallel and $0={\Ric}(\theta^\#,\theta^\#)=\frac 14 H^2(\theta^\#,\theta^\#)  = \frac14 ||\imath_{\theta^\sharp}H||^2$. 
This implies that the universal cover of $\M^4$ splits isometrically as a product $\mathrm{N}^3\times \mathbb R$, for some $3$-dimensional space $\mathrm{N}^3$, 
and that $\imath_vH=0$ whenever $v$ is tangent to the flat factor. The claim now follows from the $3$-dimensional case.  
\end{proof}

We now turn to the $5$-dimensional case and in the next theorem we determine which $5$-dimensional compact homogeneous spaces may admit invariant BRF pairs. 
\begin{theorem}\label{class5dim}
Let $(\M,g)$ be a compact $5$-dimensional homogeneous Riemannian manifold.
If $\M$ admits a harmonic $3$-form $H$ such that $(g,H)$ is a BRF pair, then one of the following holds:
\begin{enumerate}[i$)$]
\item $\M$ is finitely covered by a compact Lie group;
\item $\M$ is finitely covered by $\SU(2)^2/\T^1$, where $\T^1$ is embedded diagonally into $\SU(2)^2$.
\end{enumerate}
\end{theorem}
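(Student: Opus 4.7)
The plan is to reduce to a Lie-algebraic enumeration by combining $\dim \M = 5$ with $b_3(\M)\geq 1$, and then rule out unwanted cases via the BRF equation. First, by averaging over a maximal compact subgroup of $\Iso(\M,g)$ acting transitively, one may assume $\M=\G/\K$ with $\G$ compact connected acting almost effectively, and with both $g$ and $H$ being $\G$-invariant. If $H=0$, then $\Ric_g=0$ and the Alekseevsky--Kimelfeld theorem forces $g$ to be flat, so $\M$ is finitely covered by $\T^5$, giving case (i). Assume henceforth $H\neq 0$; since $H$ is harmonic and nonzero, $[H]\neq 0$ in $H^3(\M,\mathbb{R})$, hence $b_3(\M)\geq 1$.

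Decompose $\gg=\gz\oplus\bigoplus_{i=1}^r\gg_i$ into centre and simple ideals. Almost-effectiveness gives $\gk\cap\gz=0$ and $\gg_i\not\subset\gk$, while $\dim\gg=5+\dim\gk$, $\dim\gg_i\geq 3$, and $\dim\gk\leq\dim\mathfrak{so}(5)=10$ leave only finitely many admissible tuples. For $r=0$, $\gg$ is abelian and $\M$ is covered by $\T^5$. For $r=1$ with $\gg_1=\su(2)$, injectivity of $\gk\hookrightarrow\su(2)$ gives $\dim\gk\leq 1$, and inspection of the possible embeddings of $\gk$ shows $\M$ is finitely covered either by $\T^2\times\SU(2)$ (case (i)) or by $\T^3\times S^2$. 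For $r=1$ with $\dim\gg_1\geq 8$, a refinement of Theorem \ref{b3}\ref{b3a}) accounting for a central torus yields either $b_3(\M)=0$ or an incompatibility with $\dim \M=5$. For $r=2$, the dimension bound forces $\gg_{ss}=\su(2)\oplus\su(2)$, and Theorem \ref{b3}\ref{b3b}) (applied after absorbing any central torus into a finite cover) leaves only $\gk=\T^1_{\mathrm{diag}}$ (case (ii)) or $\gk$ embedded in a single factor, giving $\M\cong S^2\times S^3$; subcases with $d_z\geq 1$ are handled by the same flat-direction argument described below. Finally $r\geq 3$ is excluded by $\dim\gg_{ss}\geq 9$.

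The two remaining configurations, $\T^3\times S^2$ and $S^2\times S^3$, must be ruled out by the BRF equation itself. In both cases the isotropy representation on $\gm$ splits as an orthogonal sum with one block trivial, so every $\G$-invariant metric on $\M$ is an orthogonal product $g_1\oplus g_2$, and every $\G$-invariant $3$-form is a linear combination of forms that do not mix the two blocks. On $\T^3\times S^2$, the contribution of the invariant harmonic forms to $H^2$ restricted to the flat $\T^3$-directions is $a^2 I_3+bb^{\top}$ in terms of the coefficients $(a,b)\in\mathbb{R}\oplus\mathbb{R}^3$ of the invariant harmonic forms $dx_1\wedge dx_2\wedge dx_3$ and $dx_i\wedge\omega_{S^2}$; since $\Ric_g$ vanishes on $\T^3$, the BRF equation forces $a=0$ and $b=0$, hence $H=0$, a contradiction. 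On $S^2\times S^3$, the invariant harmonic $3$-form is a multiple of the standard form on the $\SU(2)$-factor, so $H^2$ vanishes on $TS^2$ while $\Ric_{g_1}$ cannot vanish there, $S^2$ admitting no Ricci-flat metric. The hardest point I foresee is the $r=1$, $\dim\gg_1\geq 8$ case with a nontrivial centre, where Theorem \ref{b3} does not apply verbatim, and one must either extend its argument case-by-case or invoke classifications of low-dimensional compact homogeneous spaces.
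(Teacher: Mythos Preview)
Your overall strategy---enumerate possible $(\gg,\gk)$ using $\dim\M=5$ and $b_3(\M)\geq 1$, then kill the surviving product-type spaces with the BRF equation---is the same as the paper's, and your BRF arguments for $\T^3\times S^2$ and $S^2\times S^3$ are correct and essentially identical to what the paper does.

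However, the enumeration itself has a real error. Your claim that ``$r\geq 3$ is excluded by $\dim\gg_{ss}\geq 9$'' is false: since $\dim\gg = 5+\dim\gk$ and $\dim\gk$ can be as large as $10$, the case $r=3$, $\gg_{ss}=\su(2)^3$, $\dim\gz=0$, $\dim\gk=4$ is perfectly compatible with the dimension count. The paper treats this case explicitly (it is their $\dim\K=4$ case), and it leads precisely to $S^2\times S^3$---a space you would have been able to exclude via BRF had your enumeration reached it. Similarly, in the $r=2$ case your assertion that ``the dimension bound forces $\gg_{ss}=\su(2)\oplus\su(2)$'' is not justified: for instance $\gg_{ss}=\su(2)\oplus\su(3)$ with $\dim\gk=6$ satisfies all your dimension constraints, and one needs either Theorem~\ref{b3}\ref{b3b}) together with a subalgebra check, or an isotropy argument (the paper uses the latter: $\dim\K=6$ forces $\K\cong\SO(4)$, which fixes no $3$-form), to rule it out.

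The paper sidesteps both your acknowledged hard case ($r=1$, $\dim\gg_1\geq 8$, nontrivial centre) and the $r\geq 3$ case by organizing the argument differently: first \emph{semisimple $\G$} (case split on $\dim\K\in\{1,\dots,6\}$, using the isotropy embedding $\K\hookrightarrow\SO(5)$ and Theorem~\ref{b3}), then \emph{non-semisimple $\G$} (case split on the cohomogeneity of the semisimple normal subgroup $\L$, using that the $\L$-orbits are all diffeomorphic and $\M$ is covered by $\Z_1\times\mathcal{O}$). The non-semisimple branch in particular absorbs your hard case uniformly: one lands on $\M\cong\T^k\times\mathcal{O}$ with $\mathcal{O}$ a low-dimensional $\L$-homogeneous space, classifies $\mathcal{O}$ directly, and then runs the same ``$\Ric$ vanishes on the flat factor but $H^2$ does not'' contradiction you used for $\T^3\times S^2$. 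Your phrase ``absorbing any central torus into a finite cover'' does not make sense as stated (a torus has no finite covers by simply connected groups); the paper's orbit-splitting argument is what actually does this work.
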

\begin{proof}
We consider the compact Lie group $\G$ given by the connected full isometry group of $(\M,g)$, and we recall that any harmonic form is invariant under the $\G$-action.
We start noting that we can assume $H\neq 0$, as otherwise by \cite{AK} the Ricci flat metric $g$ would be flat and $\M$ would be covered by a torus. \par

We first study the case where $\G$ is semisimple.
Let $\K$ denote the isotropy group of the $\G$-action on $\M$ at a fixed point $p$. Then, $\dim \K$ belongs to the set $\{1,2,3,4,5,6\}$.
Indeed, the isotropy representation embeds $\K$ into $\SO(5)$ and $\K$ is a proper subgroup, as the standard representation of $\SO(5)$ has no non-trivial invariant $3$-forms.
Moreover, a proper subgroup of $\SO(5)$ has dimension at most $6=\dim \SO(4)$, with equality if and only if $\K$ is conjugate to the standard $\SO(4)$.
Again, this case can be ruled out as there no non-trivial invariant $3$-forms.

The case $\dim \K=5$ cannot occur, as there are no $5$-dimensional subgroups of $\SO(5)$
(the rank is at most two: if the rank is $1$, then $\K\cong \T^1$ or $\SU(2)$, if the rank is $2$, then either $\K\cong \T^2$ or $\K\cong \T^1\cdot\SU(2)$, never with dimension $5$).\par

If $\dim \K=4$, then $\K$ and $\G$ are locally isomorphic to $\T^1\times \SU(2)$ and $\SU(2)^3$, respectively.
We denote by $\frak z$ the center of $\gk$ and by $\frak k_s\cong \su(2)$ the semisimple part of $\frak k$.
If $\gg=\oplus_{i=1}^3\gg_i$, with $\gg_i\cong \su(2)$, and $p_i:\gg\to\gg_i$ are the projections for $i=1,2,3$,
then we may suppose that $p_1(\gz)\neq\{0\}$, so that $p_1(\gk_s)=\{0\}$.
Since $\gk_s$ cannot coincide with $\gg_2$ or $\gg_3$ and since it is simple, we see that $p_i(\gk_s)=\gg_i$ and $p_i(\gz)=\{0\}$ for $i=2,3$.
Therefore, the manifold $\M$ is (up to a finite covering) $\G$-diffeomorphic to $\SU(2)/\T^1\times \SU(2)^2/\Delta\SU(2)\cong S^2\times S^3$.
In this case, the isotropy representation contains two inequivalent modules and therefore the above diffeomorphism maps the metric $g$ to a product metric.
As the $3$-form $H$ on $S^2\times S^3$ is the pull-back of a $3$-form on the $S^3$-factor,  we see that the Ricci tensor on the $S^2$-factor should vanish, a contradiction. \par

If $\dim \K = 3$, then $\dim \G= 8$ and, being semisimple, this implies $\G\cong \SU(3)$, which is simple. This contradicts Theorem \ref{b3}. 

The case $\dim \K = 2$ can be ruled out as there are no semisimple groups of dimension $7$.
The last case $\dim \K = 1$ forces $\G\cong \SU(2)^2$ and Theorem \ref{b3} says that $\K$ is embedded diagonally into $\G$,
unless $\gk$ is contained in one of the factors $\su(2)$. When this occurs, $\M$ is $\G$-diffeomorphic (up to a covering) to $\SU(2)\times S^2$.
As the isotropy $\gk$ acts on the tangent space of $S^2$ with no non-zero invariant vector and trivially on the tangent space of $\SU(2)$, we see that the above splitting is isometric.
Moreover, an invariant $3$-form $H$ on $\M$ is of the form $H_1+H_2$, where $H_1$ in an invariant form on $\SU(2)$, while $H_2 = \theta \wedge \sigma$, where $\theta$ is any invariant $1$-form on $\SU(2)$
and $\sigma$ is the volume form of $S^2$. As $dH_2=d\theta \wedge \sigma$ and $dH_1\in \Lambda^3(\su(2))$, we see that $dH=0$ forces $H=H_1$.
This means that $\imath_vH=0$ for every vector $v$ tangent to the $S^2$ factor, implying that $\Ric_{S^2}\equiv 0$, a contradiction.\par

We now deal with the non-semisimple case.
Let $\Z$ be the connected center of $\G$ and let $\L$ be the semisimple part of $\G$, which is a compact normal subgroup. We first summarize some basic observations:
\begin{enumerate}[(1)]
\item all $\L$-orbits are diffeomorphic. Indeed, $\L$ is a normal subgroup of $\G$, so for every $p\in \M$ and $g\in \G$ we have $\L\cdot gp=g(\L\cdot p)$. A generic $\L$-orbit will be denoted by $\mathcal O$;
\item Let $\U \coloneqq \{ z\in \Z \st z(\L\cdot p) = \L\cdot p,~\forall p\in \M\}$.
Then,  $\U$ is a closed subgroup of the torus $\Z$ and we can find a closed subgroup $\Z_1\subseteq \Z$ with $\Z_1\cap \U=\{e\}$,
$\Z = \U\cdot \Z_1$ and $\L\cdot \Z_1$ acting transitively on $\M$.
Indeed, at each point $q\in \M$ we have that $T_q\M = T_q(\Z_1\cdot q) \oplus T_q(\L\cdot q)$.
In detail, if $X\in \gz_1$ with $\hat X_q\in T_q(\L\cdot q)$, then for every $g\in \G$ we have $\hat X_{gq}= g_*\hat X_q\in T_{gq}(\L\cdot gq)$,
meaning that $X\in \gu$, whence $X=0$;
\item\label{rem3} the manifold $\M$ is $\G$-diffeomorphic (up to a finite covering) to the product $\Z_1\times \mathcal O$.
Indeed, the map $F : \Z_1\times \mathcal O\to \M$ given by $F(z,p)=z\cdot p$ is a local diffeomorphism, hence a covering thanks to the compactness of the involved spaces.
\end{enumerate}
Since $\L$ is non-trivial, its orbits have dimension at least $2$, whence $\dim \Z_1 = {\mathrm{chm}}(\M,\L)\leq 3$.
We now proceed by looking at the possible dimensions of $\Z_1$:
\begin{enumerate}[(a)]
\item $\dim \Z_1 = 1$. The generic $\L$-orbit $\mathcal O$ has dimension $4$ and, up to a finite covering, it is $\L$-diffeomorphic to $\SO(5)/\SO(4)$, $\SU(3)/\mathrm{S}(\U(1)\U(2))$ or $\SO(4)/\T^2$ (cf.~\cite{BB}).
As the isotropy representation has no non-trivial invariant vector in the tangent space of $\mathcal O$, we see that the splitting in (\ref{rem3}) is isometric.
As the space of invariant $3$-forms on $\mathcal O$ is trivial, the form $H$ is given by $H=dt\wedge\sigma$, where $dt\in\Lambda^1(\T^1)^{\T^1}$ and $\sigma$ is an $\L$-invariant $2$-form.
Then, $\Ric_g\left(\partial_t,\partial_t\right)=0$, while $||\imath_{\partial_t}H||^2 \neq 0$, a contradiction; \par
\item $\dim \Z_1=2$. In this case, $\dim \mathcal O=3$, and $\L$ must be either $\SU(2)$ or $\SO(3)$, up to covering.
In the former case, $\M$ is a Lie group up to covering; in the latter, $\mathcal O$ is covered by $S^3$ and the same argument as above shows that the splitting $\M=\Z_1\times S^3$ is isometric.
In this case, $H$ is a multiple of the volume form on the $S^3$ factor, as this has no non-trivial invariant $1$- or $2$-forms.
Moreover, the metric splits as the product of a flat metric on $\Z_1$ and a multiple of the standard metric on $S^3$.
Consequently, the associated Bismut connection is flat;
\item $\dim \Z_1=3$. Here $\mathcal O \cong S^2 = \SU(2)/\T^1$, so that the splitting (\ref{rem3}) is isometric.
The form $H$ can be expressed as $H=\phi\wedge\nu$, where $\phi$ is an invariant $1$-form on $\Z_1$ and $\nu$ is the volume form on $S^2$.
Again, $||\imath_vH||^2\neq 0$, for every $v$ in $T\Z_1$, while $\Ric_g(v,v)=0$, a contradiction.
\end{enumerate}\end{proof}

The previous result leads us to consider the homogeneous spaces $\M = (\SU(2)\times\SU(2) ) / \K$, with $\K\cong \T^1$ embedded diagonally.
Up to an automorphism of $\G = \SU(2)^2$, we can suppose that $\K$ is of the form
\[
\K_{p,q} \coloneqq \left\{(\mathrm{diag}(z^p,z^{-p}),\mathrm{diag}(z^q,z^{-q}))\in\SU(2)^2\ |\ z\in \T^1\right\},
\]
for some $p,q\in\mathbb N$, with $p\geq q\geq 1$ and $\rm{gcd}(p,q)=1$.
We then let $\M_{p,q} \coloneqq \G/\K_{p,q}$ and we recall that all these homogeneous spaces are diffeomorphic to $S^3\times S^2$, see \cite[Prop.~2.3]{WaZi},
although no explicit diffeomorphism is known (except when $p=q=1$).\par

\smallskip

We have the following.
\begin{theorem}\label{BismutMpq}
The $5$-dimensional compact homogeneous space $\M_{p,q}$ admits a $\G$-invariant BRF pair $(g_o,H_o)$ such that the associated Bismut connection is non-flat.
More precisely
\begin{enumerate}[$\bullet$]
\item when $p\neq q$, the space of $\G$-invariant BRF pairs $\mathcal B(\M_{p,q})^\G$ is given by $\mathbb R^{+}(g_o,H_o)$;
 \item when $p=q=1,$ the pair $(g_o,H_o)$ admits a suitable neighborhood $\mathcal{U}$ in $\mathcal M(\M_{1,1})\times \Omega^3(\M_{1,1})$ such that $\mathcal{U}\cap \mathcal B(\M_{1,1})^\G$ coincides with
 $\mathcal{U}\cap \R^+(g_o,H_o)$.
 \end{enumerate}
\end{theorem}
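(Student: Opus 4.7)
The plan is to fix a reductive decomposition of $\gg = \su(2) \oplus \su(2)$ adapted to $\gk_{p,q}$, parametrize the $\G$-invariant metrics and $3$-forms, and reduce the BRF condition to a polynomial system in these parameters.

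Pick a standard basis $\{E_1^{(j)}, E_2^{(j)}, E_3^{(j)}\}$ of each factor $\su(2)_j$ satisfying $[E_i^{(j)}, E_{i+1}^{(j)}] = E_{i+2}^{(j)}$ (indices mod $3$), so that $\gk_{p,q} = \R v$ with $v \coloneqq p E_1^{(1)} + q E_1^{(2)}$, and take the reductive complement
\[
\gm = \R e_0 \oplus \gm_1 \oplus \gm_2, \qquad e_0 \coloneqq q E_1^{(1)} - p E_1^{(2)}, \qquad \gm_j \coloneqq \Span(E_2^{(j)}, E_3^{(j)}).
\]
For $p \neq q$ the three summands are pairwise inequivalent $\K_{p,q}$-modules (the first trivial, the others rotations of distinct weights $p$, $q$), so Schur's lemma forces every invariant metric to be diagonal, $g = x_0\, e_0^\flat \otimes e_0^\flat + x_1 g_1 + x_2 g_2$ with $g_j$ the Cartan--Killing inner product on $\gm_j$, and every invariant $3$-form to be $H = a_1\, e_0^\flat \wedge \omega_1 + a_2\, e_0^\flat \wedge \omega_2$ with $\omega_j$ the corresponding area form.

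Using the Chevalley--Eilenberg formula and the brackets above, I would next compute $d e_0^\flat$, $d\omega_j$ and $d *_g H$. The conditions $dH = 0$ and $d *_g H = 0$ translate into two linear relations among $(a_1, a_2, x_0, x_1, x_2)$. The Ricci tensor of a reductive homogeneous metric is diagonal in this frame and can be read off the standard formula, and $H^2$ is likewise diagonal; the three scalar equations $\Ric_g = \tfrac14 H^2$ are then polynomial in the same parameters. After normalizing one parameter using the scaling symmetry $(g,H) \mapsto (cg, cH)$, a direct analysis should yield a unique positive solution, producing $(g_o, H_o)$ and showing $\mathcal B(\M_{p,q})^\G = \R^+(g_o, H_o)$ when $p \neq q$. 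Non-flatness of the associated Bismut connection is then free: a compact simply connected manifold carrying a flat Bismut connection is a product of compact simple Lie groups by \cite[Thm.~3.54]{FS}, which $\M_{p,q} \cong S^3 \times S^2$ is not, since there is no compact simple Lie group of dimension~$2$.

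For $p = q = 1$ the modules $\gm_1$ and $\gm_2$ become isomorphic, and the spaces of invariant metrics and $3$-forms acquire extra ``mixing'' parameters (two new components in each). To prove local uniqueness, I would consider the $\G$-equivariant map
\[
F(g, H) \coloneqq \left( \Ric_g - \tfrac14 H^2,\ dH,\ d *_g H \right)
\]
on $\mathcal M(\M_{1,1})^\G \times \Omega^3(\M_{1,1})^\G$, observe that $\R(g_o, H_o) \subseteq \ker dF_{(g_o,H_o)}$ by scaling, and show that $dF_{(g_o,H_o)}$ is injective on a complement of this line. The pair $(g_o, H_o)$ is invariant under the involution that swaps the two $\su(2)$-factors and under separate rotations within each $\gm_j$; this residual symmetry makes $dF_{(g_o,H_o)}$ block-diagonal with respect to the isotypic decomposition of the tangent space, so checking injectivity reduces to verifying a handful of explicit non-vanishing conditions. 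The implicit function theorem applied to a transverse slice then yields the desired neighborhood $\mathcal U$. The hardest step is precisely this spectral analysis: the mixing parameters activated at $p = q = 1$ produce off-diagonal contributions coupling metric perturbations to $3$-form perturbations, and one must verify that no eigenvalue of $dF_{(g_o,H_o)}$ other than the scaling one vanishes.
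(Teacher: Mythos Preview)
Your proposal is correct and follows essentially the same strategy as the paper: fix the reductive decomposition, parametrize invariant metrics and $3$-forms, reduce the BRF condition to a polynomial system, solve it explicitly for $p\neq q$, and handle $p=q=1$ by a linearization/implicit function theorem argument; the non-flatness argument is identical.

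The one implementation difference worth knowing concerns the step you flagged as hardest. Rather than block-diagonalizing $dF_{(g_o,H_o)}$ via the residual symmetry, the paper first uses the $\G$-equivariant diffeomorphisms $\tau_u$ induced by the centralizer $\U=\exp(\R e_0)$ of $\K_{1,1}$ to gauge-fix one off-diagonal metric parameter to zero; on that $5$-parameter slice the BRF system is four explicit polynomials, and a direct computation shows the Jacobian has rank $4$ at the solution. Because $(g_o,H_o)$ happens to be $\tau_u$-invariant (both off-diagonal parameters vanish), the slice result transports back to the full invariant space with no extra gauge directions appearing in the kernel---which is precisely why your direct linearization approach also goes through.
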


\begin{proof}
We begin observing that $\M_{p,q}$ does not admit any Bismut flat connection.
Indeed,  by \cite[Thm.~3.54]{FS}, a simply connected compact  Riemannian manifold admitting a Bismut flat connection is isometric to a product of compact simple Lie groups with bi-invariant metrics.
We obtain our claim by observing that there are no $5$-dimensional compact semisimple Lie groups.\par
In the Lie algebra $\su(2)$ we select the elements
\[
H=  \begin{pmatrix} \frac{i}{2}&0\\ 0&-\frac{i}{2}\end{pmatrix},\quad
E = \begin{pmatrix} 0&\frac 1{2\sqrt{2}}\\-\frac1{2\sqrt{2}}&0\end{pmatrix},\quad
V=  \begin{pmatrix} 0&\frac i{2\sqrt{2}}\\\frac i{2\sqrt{2}}&0\end{pmatrix},
\]
so that $[H,E]=V$, $[H,V]=-E$, $[H,V]= -E$ and $[E,V]=\frac 12 H$.
If $B$ denotes the Cartan-Killing form of $\su(2)$, then $B(E,E)=B(V,V)=-1$, $B(H,H)=-2$. Then, we can choose the following basis of $\gg$
\[
e_1 = (qH,-pH), \quad e_2 = (E,0),\quad e_3 = (V,0),\quad e_4 = (0,E), \quad e_5 = (0,V),\quad e_6 = (pH,qH),
\]
so that $\gk_{p,q}=\mathbb R e_6$, while $\gm\coloneqq {\mathrm{span}_\R}(e_1,\ldots,e_5)$ is an $\ad(\gk_{p,q})$-invariant subspace
and $\frak g= \gk_{p,q}+\gm$ is an $\ad(\gk_{p,q})$-invariant $B$-orthogonal decomposition.
An $\ad(\gk_{p,q})$-irreducible decomposition of $\gm$ is given by
\[
\gm = \gm_0 \oplus \gm_1 \oplus \gm_2,
\]
where $\gm_0=\R  e_1$, $\gm_1= \mathrm{span}_\R(e_2,e_3)$ and  $\gm_2=\mathrm{span}_\R(e_4,e_5)$. Moreover, we have
\[
\ad(e_6)e_1 = 0,\quad \ad(e_6)e_2 = p\,e_3,\quad \ad(e_6)e_3 = -p\,e_2,\quad \ad(e_6)e_4 = q\,e_5,\quad \ad(e_6)e_5 = -q\,e_4,
\]
thus the module $\gm_0$ is trivial, while the modules $\gm_1$ and $\gm_2$ are non-trivial and inequivalent if $p\neq q$, and non-trivial and equivalent if $p=q$.
We will discuss the cases $p\neq q$ and $p=q$ separately.

In the following,  $\mathcal{B}^* = (e^1,e^2,e^3,e^4,e^5)$ denotes the dual basis of $\mathcal{B}$, and the shortening $e^{ijk\cdots}$ is used to denote the wedge product of covectors $e^i\W e^j \W e^k \W \cdots$.
Moreover, we fix the orientation on $\gm$ for which $\mathcal{B}$ is positively oriented.

\smallskip \noindent
{\bf Case ${p\neq q}$.}
We have
\[
(\Lambda^3\gm^*)^{\gk_{p,q}} = \gm_0^*\otimes \Lambda^2\gm_1^* \oplus \gm_0^*\otimes \Lambda^2\gm_2^*,
\]
and a generic invariant 3-form is given by
\[
H = h_1 e^{123} + h_2 e^{145},
\]
for some $h_1,h_2\in\mathbb{R}$. Using the Koszul formula for the differential of invariant forms on $\gm$
\begin{equation}\label{Koszul}
dH(X_0,X_1,X_2, X_3) = \sum_{i<j} (-1)^{i+j}\ H \left([X_i,X_j]_{\gm},X_k,X_l\right),\quad X_0,\ldots, X_3\in \gm,
\end{equation}
we see that $H$ is closed if and only if $(h_1,h_2) = (\lambda q,\lambda p)$, for some $\lambda\in\R$.
Now, the generic invariant metric on $\gm$ has the following expression
\[
g = \mu^2 e^1\otimes e^1 + a^2 \left(e^2\otimes e^2 + e^3 \otimes e^3 \right) + b^2 \left(e^4\otimes e^4 + e^5 \otimes e^5 \right),
\]
for some positive real numbers $\mu,a,b$. We can then easily compute the Hodge dual of $H$ obtaining
\[
*_gH = \frac{a^2}{\mu\,b^2}\,h_2\,e^{23} + \frac{b^2}{\mu\,a^2}\,h_1\,e^{45},
\]
and we see that it is always closed.
Thus, up to a constant multiple, the generic invariant harmonic 3-form on $\gm$ is given by
\[
H = q\,e^{123} + p\, e^{145}.
\]
Now, we compute the symmetric $2$-tensor $H^2$ and we see that its non-zero components are the following
\[
\begin{split}
H^2(e_1,e_1)	&= 2\, \frac{a^4p^2+b^4q^2}{a^4b^4},\\
H^2(e_2,e_2)	&= 2\, \frac{q^2}{a^2\mu^2} = H^2(e_3,e_3), \\
H^2(e_4,e_4)	&= 2\, \frac{p^2}{b^2\mu^2} = H^2(e_5,e_5).
\end{split}
\]
As for the Ricci tensor $\Ric_g$, we choose a $g$-orthonormal basis $(E_1,E_2,E_3,E_4,E_5)$ of $\gm$, and we compute its components with respect to the basis $\mathcal{B}$ using the formula \cite[7.38]{Bes}.
Since $\gg$ is unimodular, this formula reads
\begin{equation}\label{RicHom}
\Ric_g(X,X) = -\frac12 \sum_{i=1}^5 ||[X,E_i]_{\gm}||^2 -\frac12 B(X,X) +\frac12 \sum_{1\leq i < j \leq 5} g([E_i,E_j]_{\gm},X)^2,
\end{equation}
for every $X\in\gm$.
We obtain that the only non-zero components of $\Ric_g$ are the following
\[
\begin{split}
\Ric_g(e_1,e_1)	&= \mu^4\frac{a^4p^2 + b^4q^2}{8 a^4 b^4 \left(p^2 + q^2\right)^2}, \\
\Ric_g(e_2,e_2)	&= \frac{4a^2\left(p^2 + q^2\right)^2 - \mu^2 q^2}{8a^2 \left(p^2 + q^2\right)^2}  =  \Ric_g(e_3,e_3), \\
\Ric_g(e_4,e_4)	&= \frac{4b^2\left(p^2 + q^2\right)^2 - \mu^2 p^2}{8b^2 \left(p^2 + q^2\right)^2}  =  \Ric_g(e_5,e_5).
\end{split}
\]
Now, it is easy to see that $\Ric_g = \frac14 H^2$ has a unique solution under the constraints $\mu>0,a>0,b>0$ and $p^2+q^2\neq0$, namely
\[
\mu = \sqrt{2\left(p^2+q^2\right)},\quad a = \sqrt{\frac{q^2}{p^2+q^2}},\quad b = \sqrt{\frac{p^2}{p^2+q^2}}.
\]
Thus, when $p\neq q$, the homogeneous space $\M_{p,q}=\G/\K_{p,q}$ admits an invariant metric $g_o$ and an invariant harmonic form $H_o$ giving rise to a Bismut Ricci flat connection.
Moreover, the pair $(g_o,H_o)$ is unique up to scaling.

\smallskip \noindent
{\bf Case ${p= q}$.}
We have $p=q=1$ and the modules $\gm_1$ and $\gm_2$ are equivalent. The space of $\ad(\gk_{1,1})$-invariant 3-forms is given by
\[
(\Lambda^3\gm^*)^{\gk_{1,1}} = \gm_0^*\otimes \Lambda^2\gm_1^*   \oplus \gm_0^*\otimes \Lambda^2\gm_2^* \oplus \left(\gm_0^* \otimes \gm_1^*\otimes \gm_2^*\right)^{\gk_{1,1}},
\]
with $\dim \left(\gm_0^* \otimes \gm_1^*\otimes \gm_2^*\right)^{\gk_{1,1}} = 2$.
The generic invariant 3-form $H\in(\Lambda^3\gm^*)^{\gk_{1,1}}$ has the following expression
\beq\label{invH}
H = h_1\,e^{123}  + h_2\,e^{145} + h_3 \left(e^{125}-e^{134}\right) + h_4\left(e^{124} + e^{135} \right),
\eeq
where $h_i\in\R$, for $i=1,2,3,4$. Using the Koszul formula \eqref{Koszul}, we see that $H$ is closed if and only if $h_2=h_1$.

The generic invariant metric on $\gm$ is given by
\beq\label{invg}
\begin{split}
g 	&= \mu^2 e^1\odot e^1 + a^2 \left(e^2\odot e^2 + e^3 \odot e^3 \right) + b^2 \left(e^4\odot e^4 + e^5 \odot e^5 \right) \\
	&\quad + 2c\left(e^2\odot e^4 + e^3 \odot e^5\right) +2s\left(e^2\odot e^5 - e^3\odot e^4\right),
\end{split}
\eeq
where $\mu, a, b, c, s$ are real constants such that
\[
\mu>0,\quad a>0,\quad b>0,\quad a^2b^2-c^2 + s^2>0,
\]
and $e^i\odot e^j \coloneqq \frac12\left(e^i\otimes e^j + e^j \otimes e^i\right)$.
We remark here that it is always possible to assume $s=0$.
Indeed, the one-dimensional torus $\U\coloneqq \exp(\mathbb R e_1)$ centralizes $\K_{1,1}$ and therefore for every $u\in\U$ we can consider the $\G$-equivariant diffeomorphism $\tau_u\in {\rm{Dif{}f}}(\M)^\G$ given by
$\tau_u(x\K_{1,1}) = xu\K_{1,1}$, for $x\in \G$. Then, for every $g\in S^2(\gm)^{\gk_{1,1}}$, which is given by the data $(\mu,a,b,c,s)$ as in \eqref{invg},
the $\G$-invariant symmetric tensor $\tau_u^*g$ corresponds to the data $(\mu,a,b,c'=c\,\cos t + s\, \sin t,s'= -c\, \sin t + s\, \cos t)$, for $u=\exp(te_1)\in\U$.
Therefore, we immediately see that we have $s'=0$ for an appropriate choice of $u\in\U$.\par

Now, we have
\[
\begin{split}
*_gH &= \frac{h_1\left(a^4+ c^2\right) - 2a^2c h_3 }{\mu\left(a^2b^2 - c^2\right)}\, e^{23}  + \frac{ h_1\left(b^4 + c^2\right) - 2b^2ch_3}{\mu\left(a^2b^2 - c^2\right)}\, e^{45}\\
	&\quad -\frac{h_4}{\mu}\, \left(e^{24}+e^{35}\right) - \frac{h_3\left( a^2 b^2 + c^2\right)  -c h_1\left( a^2 + b^2\right)  }{\mu\left(a^2b^2 - c^2\right)} \left(e^{25}-e^{34}\right),
\end{split}
\]
and using again the Koszul formula, we obtain
\[
d*_gH = 2\,\frac{h_4}{\mu}\left(e^{125}  - e^{134}\right) -2\,\frac{h_3\left(a^2 b^2 +c^2\right)   -c h_1\left( a^2 + b^2\right)  }{\mu\left(a^2b^2 - c^2\right)}\left(e^{124} + e^{135} \right).
\]
Thus, $H$ is coclosed if and only if
\[
h_3 =   \frac{c h_1\left( a^2 + b^2\right)}{a^2 b^2 +c^2 }, \quad h_4 = 0.
\]
The generic invariant harmonic 3-form on $\gm$ is then given by
\beq\label{invhH}
H = h_1\left( e^{123}  + e^{145} +  c\,\frac{  a^2 + b^2 }{a^2 b^2 +c^2 } \left(e^{125}-e^{134}\right) \right).
\eeq
Consequently, the (possibly) non-zero components of the invariant symmetric $2$-tensor $H^2$ are the following
\[
\begin{split}
H^2(e_1,e_1)	&= 2 h_1^2\, \frac{a^4 + b^4 - 2c^2}{\left(a^2b^2 - c^2\right)\left(a^2b^2 + c^2\right)},\\
H^2(e_2,e_2)	&= 2 h_1^2\, \frac{a^2c^2 \left(a^4 + b^4\right)  -c^4 \left(2a^2 +  b^2\right) + a^4b^6}{\mu^2\left(a^2b^2 - c^2\right)\left(a^2b^2 + c^2\right)^2} = H^2(e_3,e_3),\\
H^2(e_4,e_4)	&= 2 h_1^2\, \frac{b^2 c^2 \left(a^4 + b^4\right) -c^4\left(a^2 + 2b^2\right)  + a^6b^4}{\mu^2\left(a^2b^2 - c^2\right)\left(a^2b^2 + c^2\right)^2} = H^2(e_5,e_5), \\
H^2(e_2,e_4)	&= 2 c h_1^2\, \frac{ a^2b^2\left(  a^4 + a^2 b^2 +  b^4 - 2 c^2\right) - c^4}{\mu^2\left(a^2b^2 - c^2\right)\left(a^2b^2 + c^2	\right)^2} = H^2(e_3,e_5).
\end{split}
\]
Let us consider the following $g$-orthonormal basis of $\gm$
\[
\begin{split}
E_1 	&= \frac{1}{\mu}\,e_1,\quad E_i	= \frac{1}{a}\,e_i,~i=2,3,\\
E_j	&= -\frac{c}{a\sqrt{a^2b^2-c^2}}\,e_{j-2} + \frac{a}{\sqrt{a^2b^2-c^2}}\,e_j,~j=4,5.
\end{split}
\]
Then, using formula \eqref{RicHom} and its polarization, we obtain the following expressions for the (possibly) non-zero components of the Ricci tensor
\[
\begin{split}
\Ric_g(e_1,e_1)	&= \frac{ 2c^2\left( 64 c^2 - 64 a^2 b^2 - \mu^4\right) + \mu^4 (a^4 + b^4)}{32 \left(a^2 b^2 - c^2\right)^2},  \\
\Ric_g(e_2,e_2)	&= \frac{ 64a^2c^2 + \mu^2\left(16a^2b^2 -b^2\mu^2  - 16c^2\right) }{32 \mu^2 \left(a^2b^2 - c^2\right)} = \Ric_g(e_3,e_3),\\
\Ric_g(e_4,e_4)	&= \frac{64b^2c^2 +\mu^2 \left( 16a^2b^2 -a^2\mu^2 - 16c^2\right)}{32\mu^2\left(a^2b^2 - c^2\right)} = \Ric_g(e_5,e_5), \\
\Ric_g(e_2,e_4)	&= c\, \frac{64a^2b^2 - \mu^4}{32\mu^2\left(a^2b^2 - c^2\right)}= \Ric_g(e_3,e_5).
\end{split}
\]
We now look for invariant metrics $g$ and harmonic 3-forms $H$ for which $\Ric_g = \frac14 H^2$.

A computation using the above expressions of the components of $\Ric_g$ and $H^2$ shows that
we need to find points in the open subset
\[
\mathcal{A} \coloneqq \left\{(\mu,a,b,c,h_1) \in \R^5 \st \mu>0, a>0, b>0, a^2b^2-c^2>0, h_1\neq0 \right\} \subset \R^5,
\]
where all of the following polynomials vanish
\[
\begin{split}
p_1	&= \left(a^{4}+b^{4}-2 c^{2}\right) \left[\mu^{4} \left(a^{2} b^{2}+c^{2}\right)-16\,h_{1}^{2} \left(a^2b^2 -c^2 \right) \right] -128\, c^{2}  \left(a^4b^4 -c^4 \right), \\
p_2	&= \left(a^{2} b^{2}+c^{2}\right)^{2} \left[16\,\mu^{2} \left(a^{2} b^{2}-c^{2}\right) +64 a^{2} c^{2} -b^{2} \mu^{4}\right]-16 h_{1}^{2} \left[a^{4} b^{6}+ c^{2}\left(a^{6} +a^{2} b^{4} -2 a^{2} c^{2}-b^{2} c^{2}\right)\right], \\
p_3	&= \left(a^{2} b^{2}+c^{2}\right)^{2} \left[16\,\mu^{2} \left(a^{2} b^{2}-c^{2}\right) +64 b^{2} c^{2} -a^{2} \mu^{4}\right]-16 h_{1}^{2} \left[a^{6} b^{4}+ c^{2} \left(b^{6}  +a^{4} b^{2} -2 b^{2} c^{2} -a^{2} c^{2} \right)\right], \\
p_4	&= c \left[\left(a^{2} b^{2}+c^{2}\right)^{2} \left(64 a^{2} b^{2}-\mu^{4}\right)-16 h_{1}^{2} \left(a^2b^2 \left(a^{2} + b^{2}\right)^{2}-\left(a^{2} b^{2}+c^{2}\right)^{2}\right)\right].
\end{split}
\]
Clearly, the polynomial $p_4$ vanishes if $c=0$. When this happens, the remaining polynomials simplify considerably, and they vanish simultaneously if and only if
\[
\mu= 2\sqrt{2}\,t,\quad a = b = t,\quad h_1 = \pm2t^2,
\]
for some $t>0$.
Therefore, when $c=0$, we obtain an invariant BRF pair $(g_o,H_o) \in \mathcal{B}(\M_{1,1})^\G$ that is unique up to scaling and up to a sign in the definition of $H_o$, namely
\[
g_o = 8\, e^1\odot e^1 +  \left(e^2\odot e^2 + e^3 \odot e^3 \right) + \left(e^4\odot e^4 + e^5 \odot e^5 \right), \quad
H_o = 2e^{123}  + 2e^{145}.
\]
To prove the last assertion, we first consider the set
\[
\mathcal P \coloneqq \left\{(g,H)\in S^2(\gm)^{\gk_{1,1}}\times \Lambda^3(\gm)^{\gk_{1,1}} \st g >0,~dH=0,~\d_gH=0\right\},
\]
and we recall that every invariant metric is uniquely determined by the string $(\mu,a,b,c,s)$ as in \eqref{invg}. We then consider the subset
\[
\mathcal P' \coloneqq \left\{(g,H)\in \mathcal P|\ g(e_2,e_5)=0\right\}.
\]
Every point $(g,H)\in\mathcal P'$ is uniquely determined by a string $(\mu,a,b,c,h_1)\in \mathcal{A} \subset  \mathbb R^5$, where $(\mu,a,b,c)$ determines $g$ as in \eqref{invg}
and $h_1$ determines $H$ as in \eqref{invhH}.
The set $\mathcal B(\M_{1,1})^{\G} \cap \mathcal P'$ can be identified with the zero set in $\mathcal A$ of the map
\[
F : \mathcal{A} \rightarrow \R^4,\quad F(\mu,a,b,c,h_1) = (p_1,p_2,p_3,p_4).
\]
Notice that $F(x_o)=0$, where $x_o=(2\sqrt{2},1,1,0,2)\in\mathcal{A}$ corresponds to the BRF pair $(g_o,H_o)$, and that $F(\g(t))=0\in\mathbb R^4$ for every $t\in \R^+$,
where $\g(t) = (2\sqrt{2}t,t,t,0,2t^2)$ is the curve through $x_o$ corresponding to $(t^2g_o,t^2H_o)$. If we now compute the differential of $F$ at $x_o$, we obtain
\[
F_{*x_o} =
\left(
\begin{array}{ccccc}
128 \sqrt{2} & 0 & 0 & 0 & -128 \\
 0 & 256 & 0 & 0 & -64 \\
 0 & 0 & 256 & 0 & -64 \\
 0 & 0 & 0 & -192 & 0
\end{array}
\right),
\]
and since $\mathrm{rank} \left(F_{*x_o} \right) = 4,$ we see that there exists a neighborhood $\mathcal U'$ of $(g_o,H_o)$ in $\mathcal P'$ so that $\mathcal U' \cap \mathcal B(\M_{1,1})^{\G}$
coincides with the set $\left\{(t^2g_o,t^2H_o) \st t\in (1-\varepsilon,1+\varepsilon')\right\}$, for suitable $\varepsilon,\varepsilon' >0$.\par
We now use the $\G$-equivariant diffeomorphisms $\tau_u$, $u\in \U$, to transform every element of $\mathcal B(\M_{1,1})^{\G}\cap \mathcal P$ into an element of
$\mathcal B(\M_{1,1})^{\G}\cap \mathcal P'$ via $\tau_u^*$.
Note that for every $u\in \U$ we have $(\tau_u^*g_o,\tau_u^*H_o)=(g_o,H_o)$, since $c=s=0$. Moreover, we can easily see that there exists a
neighborhood $\mathcal U$ of $(g_o,H_o)$ in $\mathcal P$ so that $\tau_u^*(\mathcal U)\subseteq \mathcal U'$.
Therefore, if $(g,H)\in \mathcal B(\M_{1,1})^{\G}\cap \mathcal U$, then we can find $u\in \U$ so that $\tau_u^*(g,H)\in \mathcal B(\M_{1,1})^{\G}\cap \mathcal U'$.
Hence, $\tau_u^*(g,H)= (t^2g_o,t^2H_o)$, for some $t > 0$, and we have $(g,H)=\tau_{u^{-1}}^*(t^2g_o,t^2H_o) = (t^2g_o,t^2H_o)$.
\end{proof}

\begin{remark}
The Bismut Ricci flat, non-flat manifolds constructed in the previous theorem provide counterexamples to a generalized Alekseevsky-Kimelfeld theorem \cite{AK} for the Bismut connection. 
This question was raised in \cite{FS}, cf.~Question 3.58. We also remark here that the Riemannian manifolds $(\M_{p,q},g_o)$  are not Einstein.
\end{remark}

\begin{remark}
In the proof, we have noticed that the BRF pairs $(g,H)$ are unique (up to multiples) when $p\neq q$.
When $p=q=1$, we have given the full description of all possible invariant metrics and relative harmonic $3$-forms.
Unfortunately, the complexity of the computations prevented us from finding other solutions.
\end{remark}

\begin{remark}
We remark that the $3$-form $H$ we have constructed in the Riemannian spaces $(\M_{p,q},g)$ is harmonic but never parallel with respect to the Bismut connection.
Indeed, if it were, then we would have $dH = 2\sigma_H = 0$, where $\sigma_H$ is the {\em fundamental $4$-form} $\sigma_H=\frac 12 \sum_{i=1}^5\imath_{v_i}H\wedge\imath_{v_i}H$ and $\{v_i\}$ is a local orthonormal frame
(see e.g.~\cite{FI}). By \cite[Thm.~4.1]{AFF}, we would then have that $\M_{p,q}$ is a compact simple Lie group, a contradiction.
\end{remark}

\subsection{The Kobayashi's construction}\label{KobaConst}
The manifolds $\M_{p,q}$ naturally appear as principal $S^1$-bundles over $\SU(2)^2/\T^2\cong S^2\times S^2$.
In this section, we briefly review a useful construction, due to Kobayashi \cite{K}, which allows to obtain new geometric structures on principal $S^1$-bundles over some base manifold $B$.
This might lead to a generalization of our examples. \par
Given a compact manifold $B$, it is well known that there is a one-to-one correspondence between principal $S^1$-bundles $\pi:P\to B$ and elements in $H^2(B,\mathbb Z)$.
Given a closed $2$-form $\a$ with $[\a]\in {H}^2(B,\mathbb Z)$, there exists a principal $S^1$-bundle $\pi:P\to B$ and a connection form $\g$ so that $d\g=\pi^*\a$.
If $B$ is equipped with a Riemannian metric $g_o$, we can construct a Riemannian metric $g$ on $P$ by setting $g \coloneqq c^2\g\odot\g + \pi^*g_o$, for some $c>0$.
Following \cite{K}, we compute the Ricci curvature as follows: if $X,Y$ are horizontal tangent vectors and $v$ is a unit vertical tangent vector, we have
\[
\begin{split}
\Ric_g(X,Y)	&= \Ric_{g_o}(\pi_*X,\pi_*Y) - 2 \hat\a(\pi_*X,\pi_*Y),\\
\Ric_g(X,v) 	&= c\, \delta_{g_o}\a(\pi_*X),\\
\Ric_g(v,v) 	&= c^2\, ||\a||^2,
\end{split}
\]
where for every $2$-form $\omega$ we define $\hat\o\in S^2(T^*B)$ as $\hat\o(Z,W) = g_o(\imath_Z\o,\imath_W\o)$, for every tangent vector fields $Z,W\in\Gamma(TB)$, and we let $||\o||^2= g_o(\hat\o,\hat\o)$. \par
We now prove a result that might be useful to construct new examples of BRF pairs on suitable manifolds.
In particular, this result allows to reduce the problem of finding harmonic $3$-forms on a certain manifold to a problem on the existence of suitable harmonic $2$-forms on a lower dimensional manifold.

\begin{proposition}\label{KobaProp}
Given a compact Riemannian manifold $(B,g_o)$ and a non-zero harmonic $2$-form $\a$ with $[\a]\in {H}^2(B,\mathbb Z)$,
the associated principal $S^1$-bundle $P$ over $B$ admits a BRF pair $(g,H)$ if there exist a harmonic form $\beta\in \Omega^2(B)$ and positive numbers $\lambda,\mu\in \mathbb R^+$
so that the following conditions are fulfilled:
\begin{enumerate}[a$)$]
\item\label{Koa} $\a\wedge\b=0$;
\item\label{Kob} $||\b||^2 = \lambda ||\a||^2$ at every point of $B$;
\item\label{Koc} the Ricci tensor $\Ric_{g_o}$ satisfies the equation
\[
\Ric_{g_o} = 2 \hat \a + \mu\hat \b.
\]
\end{enumerate}
\end{proposition}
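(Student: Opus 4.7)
The plan is to build $H$ out of the only natural closed $2$-form we have in the picture, $\beta$, combined with the connection form $\gamma$ on $P$. Concretely, I would take the Kobayashi metric $g = c^2\gamma\odot\gamma + \pi^*g_o$ and try the ansatz
\[
H \coloneqq \kappa\, \gamma\wedge \pi^*\beta,
\]
with two positive parameters $\kappa,c$ to be pinned down at the end.

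First I would verify harmonicity of $H$. Using $d\gamma = \pi^*\alpha$, $d\beta = 0$ and hypothesis \ref{Koa}), the computation
\[
dH = \kappa\,\pi^*(\alpha\wedge\beta) - \kappa\,\gamma\wedge\pi^*(d\beta) = 0
\]
is immediate. For coclosedness I would invoke the standard Hodge star formula for a submersion metric of this block form, namely $*_g(\gamma\wedge\pi^*\beta) = c^{-1}\,\pi^*(*_{g_o}\beta)$, so that $d *_g H = 0$ follows at once from the coclosedness of $\beta$ on $(B,g_o)$.

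Next I would compute the symmetric tensor $H^2$ block by block. Splitting interior products along the horizontal/vertical decomposition one obtains, for horizontal $X,Y$ and a unit vertical vector $v$,
\[
H^2(X,Y)=\tfrac{\kappa^2}{c^2}\,\hat\beta(\pi_*X,\pi_*Y), \qquad H^2(v,v)=\tfrac{\kappa^2}{c^2}\,\|\beta\|^2, \qquad H^2(X,v)=0.
\]
Combining the Kobayashi formulas for $\Ric_g$ recalled above with hypothesis \ref{Koc}) and with $\delta_{g_o}\alpha = 0$ (which holds since $\alpha$ is harmonic), one finds $\Ric_g(X,Y) = \mu\,\hat\beta(\pi_*X,\pi_*Y)$, $\Ric_g(v,v) = c^2\|\alpha\|^2$, and $\Ric_g(X,v) = 0$. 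The equation $\Ric_g = \tfrac14 H^2$ then collapses to the two scalar conditions $\kappa^2 = 4\mu c^2$ and $4c^4 = \kappa^2\lambda$, which, thanks to hypothesis \ref{Kob}), admit the unique positive solution $c = \sqrt{\mu\lambda}$, $\kappa = 2\mu\sqrt{\lambda}$.

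The delicate point is that hypothesis \ref{Kob}) must be imposed \emph{pointwise} and not just in an integrated or cohomological sense: the vertical equation $c^2\|\alpha\|^2 = \tfrac14 H^2(v,v)$ has to hold at every point of $B$ with a single constant $c$, and without the pointwise proportionality $\|\beta\|^2 = \lambda\|\alpha\|^2$ no uniform choice of $c$ could work. Everything else is routine bookkeeping with Kobayashi's submersion formulas for $\Ric_g$ and the Hodge star identity above.
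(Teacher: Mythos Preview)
Your argument is correct and follows essentially the same route as the paper's own proof: the same ansatz $g=c^2\gamma\odot\gamma+\pi^*g_o$, $H=\kappa\,\gamma\wedge\pi^*\beta$, the same verification of harmonicity via $\alpha\wedge\beta=0$ and the harmonicity of $\beta$, the same block decomposition of $\Ric_g$ and $H^2$ using Kobayashi's formulas, and the same resulting scalar system $4c^4=\lambda\kappa^2$, $\kappa^2=4\mu c^2$ with solution $c^2=\lambda\mu$, $\kappa^2=4\lambda\mu^2$. The only cosmetic difference is that you state the Hodge star identity $*_g(\gamma\wedge\pi^*\beta)=c^{-1}\pi^*(*_{g_o}\beta)$ explicitly, whereas the paper simply asserts the equivalent fact that $*_gH$ is closed iff $\beta$ is coclosed.
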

\begin{proof}
We consider the metric $g= c^2\g\odot\g + \pi^*g_o$, for some $c>0$, as above. We then define a $3$-form $H\in\Omega^3(P)$ as
\[
H \coloneqq h\, \g\wedge\pi^*\b,
\]
for some $h\in\mathbb R$.
Searching for conditions implying that $(g,H)$ is a BRF pair, we see that $H$ is closed if and only if $\alpha\wedge\beta=0$ and $\beta$ is closed, while 
it is coclosed if and only if $ d(\pi^*(*_{g_o}\b))=0$, i.e., $\b$ is coclosed.
Therefore, $H$ is $g$-harmonic if and only if $\beta$ is $g_o$-harmonic and condition \ref{Koa}) holds.
Looking now at the expression of the Ricci tensor of $g$, we see that $\a$ being harmonic implies that $\Ric_g(V,Z)$ vanishes whenever $V$ is vertical and $Z$ is horizontal.
As we easily check,  $H^2(Z,V)=0$ as well. Thus, we only need to verify the following
\beq \label{1}
\Ric_{g_o}(X,Y) = 2   \hat \a(X,Y) + \frac 14 H^2(X,Y),
\eeq
\beq \label{2}
c^2||\a||^2 = \frac 14 ||\imath_vH||^2,
\eeq
where $X,Y$ are vector fields on $B$ and $v$ is a unit vertical field on $P$. Now, using \ref{Kob}), we have
\[
||\imath_vH||^2 = h^2 (\g(v))^2 ||\b||^2 = \frac {h^2}{c^2}||\b||^2 = \lambda \frac {h^2}{c^2}||\a||^2,
\]
hence equation \eqref{2} reads
\[
c^4 = \frac 14\lambda h^2.
\]
As for equation \eqref{1}, we see that $H^2(X,Y)=\frac{h^2}{c^2} \hat\b(X,Y)$ so that \eqref{1} reads
\[
\Ric_{g_o} = 2\hat\a + \frac{h^2}{4c^2}\hat\b.
\]
Comparing this last identity with \ref{Koc}), we see that the constants $c$ and $h$ must satisfy
\[
\begin{cases}
4c^4 = \lambda h^2,\\
4c^2 = \frac{h^2}{\mu}.
\end{cases}
\]
Choosing $c^2 = \lambda\mu$ and $h^2 = 4\lambda\mu^2$, we have that $(g,H)$ defines a BRF pair on $P.$
\end{proof}

\begin{remark}
Our examples on the manifolds $\M_{p,q}$ can be seen as special cases of the above construction.
\end{remark}

\section{The asymptotic behavior of the solution to the GRF on $\M_{p,q}$ }\label{Mpqasymp}

In this last section, we consider the homogeneous space $\M_{p,q} = \G/\K_{p,q}$, $p\neq q$, with the BRF pair $(g_o,H_o)$ determined in the proof of Theorem \ref{BismutMpq},
and we study the behavior of the generalized Ricci flow \eqref{GRF} starting at an invariant pair $(g,H)$ with $dH=0$.

We look for invariant solutions $(g_t,H_t)$ to the flow.
It follows from the discussion in the proof of Theorem \ref{BismutMpq} that we have
\[
g_t = \mu_t^2 e^1\otimes e^1 + a_t^2 \left(e^2\otimes e^2 + e^3 \otimes e^3 \right) + b_t^2 \left(e^4\otimes e^4 + e^5 \otimes e^5 \right),
\]
and
\[
H_t = \lambda_t H_o =  \lambda_t\left( q\,e^{123} + p\, e^{145}\right),
\]
where $\mu_t,a_t,b_t,\lambda_t$ are (positive) real valued functions of $t$.

The BRF pair $(g_o,H_o)$ is a fixed point of the generalized Ricci flow, thus the solution to the flow starting from it corresponds to the constant functions
\[
\mu_o \equiv \sqrt{2\left(p^2+q^2\right)},\quad a_o \equiv \sqrt{\frac{q^2}{p^2+q^2}},\quad b_o \equiv \sqrt{\frac{p^2}{p^2+q^2}},\quad \lambda_o \equiv 1.
\]

Now, since $H_t$ is $g_t$-harmonic, from the second equation in \eqref{GRF} we obtain
\[
0 = -\Delta_{g_t}H_t = \frac{\partial}{\partial t} H_t = \frac{d}{dt}\lambda_t H_o,
\]
whence it follows that $\lambda_t =\lambda\in\R^+$ is a positive constant and thus $H_t  =\lambda H_o$.
We now consider the first equation in \eqref{GRF}
\[
 \frac{\partial}{\partial t} g_t = -2\,\Ric_{g_t}+\frac12 H^2_t =  -2\, \Ric_{g_t}+\frac12 \lambda^2H^2_o.
\]
Using again our previous computations, we see that this equation is equivalent to an autonomous system of ODEs for the functions $\mu_t,a_t,b_t$.
If we let  $M_t\coloneqq \mu_t^2$, $A_t\coloneqq a_t^2$, $B_t\coloneqq b_t^2$, then the system is the following:
\begin{equation}\label{odeMpq}
\left\{
\begin{split}
\frac{d}{dt} M_t		&= \left(\frac{p^2}{4 \left(p^{2}+q^{2}\right)^{2}} \frac{1}{B_t^{2}} + \frac{q^2}{4  \left(p^{2}+q^{2}\right)^{2} } \frac{1}{A_t^{2}}   \right)
					\left(4\, \lambda^{2} \left(p^{2}+q^{2}\right)^{2}-M_t^{2}\right),  \\
\frac{d}{dt} A_t		&= \frac{q^2}{A_t} \left( \frac{ \lambda^{2} }{  M_t } +  \frac{1}{4  \left(p^{2}+q^{2}\right)^{2} } M_t\right) - 1,\\
\frac{d}{dt} B_t		&= \frac{p^2}{B_t} \left( \frac{ \lambda^{2} }{ M_t } +  \frac{1}{4  \left(p^{2}+q^{2}\right)^{2} } M_t \right) - 1.
\end{split}
\right.
\end{equation}
The fixed point of this system is given by $ x_{o,\lambda} \coloneqq \left(\lambda \mu_o^2,  \lambda a_o^2, \lambda b_o^2\right)$ and it corresponds to the invariant metric $\lambda g_o$.
We can write the system \eqref{odeMpq} as follows
\[
\frac{d}{dt}  (M_t,A_t,A_t) = f(M_t,A_t,B_t),
\]
and computing the Jacobian of $f$ at the fixed point $x_{o,\lambda}$, we obtain
\[
\mathrm{J}(f)_{x_{o,\lambda}} = \begin{pmatrix}
- \frac{\left(p^2+q^2\right)^2}{\lambda p^2q^2}	&	0	&	0 \\
0	&	- {\frac{p^2+q^2}{\lambda q^2}}		&	0\\
0	&	0	&	-{\frac{p^2+q^2}{\lambda p^2}}
\end{pmatrix}.
\]
Since $\mathrm{J}(f)_{x_{o,\lambda}}$ has negative real eigenvalues, we see that the fixed point $x_{o,\lambda}$ of \eqref{odeMpq} is asymptotically stable, namely
every solution $x_t = (M_t,A_t,B_t)$ to \eqref{odeMpq} starting sufficiently close to $x_{o,\lambda}$ exists for all positive time,
it stays close to $x_{o,\lambda}$, and their distance tends to zero as $t\rightarrow +\infty$.
In other words, the BRF pair $(\lambda g_o,\lambda H_o)$ on $\M_{p,q}$ is asymptotically stable along the generalized Ricci flow. 

A direct qualitative analysis of the system \eqref{odeMpq} shows that the solution $(M_t,A_t,B_t)$ starting at any given triplet of positive numbers  $(M_o,A_o,B_o)$ at $t=t_o$
exists for all $t \geq t_o$ and it converges to the fixed point $x_{o,\lambda}$ as $t\rightarrow+\infty$.

\begin{remark}
The analogous study of the behavior of the generalized Ricci flow on $\M_{1,1}$ is much more involved, due to the presence of off-diagonal terms in the invariant symmetric tensors.
\end{remark}

\noindent
{\bf Acknowledgements.}
The authors were supported by GNSAGA of INdAM and by the project PRIN 2017  ``Real and Complex Manifolds: Topology, Geometry and Holomorphic Dynamics''.
The authors warmly thank Jeffrey Streets for useful comments on a preliminary version of this work.

\end{document}